\documentclass[preprint,12pt]{elsarticle}

\usepackage{amssymb}
\usepackage{amsmath}
\usepackage{amsthm}

\usepackage{lmodern}
\usepackage{microtype}
\usepackage{bm}
\usepackage{booktabs,longtable,array}
\usepackage{enumitem}
\usepackage{hyperref}
\usepackage{listings}
\usepackage{xcolor}
\usepackage{caption}
\usepackage{fancyvrb}
\usepackage{setspace}
\usepackage{tocloft}

\usepackage{algorithm}
\usepackage{algpseudocode}

\newtheorem{lemma}{Lemma}
\newtheorem{theorem}{Theorem}
\newtheorem{proposition}{Proposition}

\newcommand{\dd}{\,\mathrm{d}}

\journal{Nuclear Physics B}

\begin{document}

\begin{frontmatter}

\title{On the First Derivative Bounds for Rational B\'ezier Curves}



\author{Mao Shi}
\affiliation{organization={School of Mathematics and Statistics, Shaanxi Normal University},
            city={Xi'an},
            country={China}}

\begin{abstract}
In this paper we investigate sharp upper bounds for the first derivative of rational B\'ezier curves. A long‑standing conjecture posited that the linear bound \(\|\mathbf{R}'(t)\| \le n\Omega \max\|\Delta_i\|\) holds for all degrees. We prove that the bound is indeed valid for \(n \leq 6\), thus resolving the last open low‑degree case. The problem is reformulated as maximizing a variance‑like function over a compact box. Using a block argument we show that optima can only appear on one‑dimensional faces, reducing the task to a finite family of polynomial inequalities, which are verified exactly via real quantifier elimination. A notable practical feature is that the bound can be evaluated in linear time with respect to the degree, making it attractive for real‑time geometric processing. The same structural analysis illustrates the failure for \(n=7\) and outlines how the true worst‑case constant can be computed.
\end{abstract}


\begin{highlights}
\item Rigorously proves a first derivative bound conjecture for rational Bézier curves of degree 6.
\item Reduces the global optimization problem to the 1‑dimensional skeleton of a compact box.
\item Employs exact symbolic computation (quantifier elimination) for a finite case verification.
\item Demonstrates the conjecture is false for degree 7 and provides a method for computing the true bound.
\item The verified first derivative bound is computable in \(\mathcal{O}(n)\) time, suitable for real‑time applications.
\end{highlights}

\begin{keyword}
Rational B\'ezier curves \sep First derivative bounds \sep Semi-algebraic optimization \sep Quantifier elimination
\end{keyword}

\end{frontmatter}

\section{Introduction}
\label{sec:intro}
Estimating the first derivative bounds of rational B\'ezier curves is a fundamental problem in Computer Aided Geometric Design (CAGD). Let
\begin{equation}\label{eq:curve}
  \mathbf{R}(t)=\frac{\sum_{i=0}^n \omega_i B_i^n(t) \mathbf{P}_i}{\sum_{i=0}^n \omega_i B_i^n(t)},
\qquad 0\le t\le 1,
\end{equation}
be a degree-$n$ rational Bézier curve with control points \(\mathbf{P}_i \in \mathbb{R}^d\) and positive weights \(\omega_i\), where \(B_i^n(t)= \binom{n}{i}t^i(1-t)^{n-i}\) are the Bernstein basis functions. Let \(\Delta_i = \mathbf{P}_{i+1}-\mathbf{P}_i\) denote the control vector of the polygon.

Floater~\cite{Floater1992} derived the first derivative inequality \(\|\mathbf{R}'\|\le n \bigl(\frac{\max\omega_i}{\min\omega_i}\bigr)^2 \max\|\Delta_i\|\). Selimović~\cite{Selimovc2005} improved this by using the maximal adjacent weight ratio
\begin{equation}\label{eq:omega-def}
\Omega = \max_{0\le i\le n-1} \max\left\{\frac{\omega_{i+1}}{\omega_i}, \frac{\omega_i}{\omega_{i+1}}\right\},
\end{equation}
obtaining \(\|\mathbf{R}'\|\le n\Omega^n \max\|\Delta_i\|\). Based on the derivative formula of rational Bézier curves given by Sederberg and Wang~\cite{Sederberg1987},  Zhang and Ma~\cite{Zhang2006,Zhang2016} derived the tighter bound \(\|\mathbf{R}'(t)\| \leq n\Omega \max\|\Delta_i\|\) for \(n=2,3,4\). Using extensive numerical experiments, Li et al.~\cite{Li2013} conjectured that this linear first derivative bound holds for all degrees \(n\). However, Shi~\cite{Shi2026} constructed counterexamples showing the inequality fails for \(n \ge 7\). The cases \(n=5\) and \(n=6\) have remained open.

In this paper we prove the conjecture for \(n = 6\). The cases \(n=2, 3, 4, 5\) can be handled by the same method. Our approach is based on a reduction of the geometric inequality to a global optimization problem over a compact semi-algebraic set, namely the box of admissible adjacent weight ratios. We then show, via a structural analysis of the critical point equations, that the maximum of the resulting scalar function can only be attained on the low‑dimensional boundary of this box—more precisely, on its vertices or one‑dimensional edges. This reduction transforms the original analytic statement into a finite family of polynomial inequalities involving two real variables, \(u>0\) and \(\Omega\ge 1\). Each of these conditions is a universally quantified formula of the form \(\forall\,u>0,\;\Omega\ge 1:\ \Phi(u,\Omega)\Rightarrow\Xi(u,\Omega)\ge 0\), which belongs to the decidable theory of real closed fields. The method of quantifier elimination, a fundamental concept in mathematical logic (see, e.g., Rosen~\cite{Rosen2019} or Shi~\cite{Shi2024} for an accessible introduction), was shown to be applicable to real closed fields by Tarski~\cite{Tarski1951} and later made effective through Collins' cylindrical algebraic decomposition (CAD)~\cite{Collins1975}. At the algebraic level, the elimination of variables from polynomial systems relies on elimination theory and Gröbner bases, as systematically developed in the monograph by Cox, Little, and O'Shea~\cite{Cox2015}. Modern treatments of real quantifier elimination, such as the comprehensive monograph by Basu, Pollack, and Roy~\cite{Basu2006}, provide a detailed algorithmic exposition of these methods and their implementation in computer algebra systems like Mathematica's \texttt{Resolve} function. The same structural analysis also reveals that for \(n=7\) the bound fails, and we outline how the true worst‑case constant can be computed.

A notable practical aspect is that the bound is computationally very cheap: both \(\Omega\) and the final quantity \(n\Omega \max\|\Delta_i\|\) can be evaluated in linear time with respect to the degree \(n\). This is summarized in the following lemma, whose proof follows immediately from the definitions.

\begin{lemma}[Linear complexity of the bound]\label{lem:complexity}
Let \(\omega_0,\dots,\omega_n>0\) and \(\Delta_i\in\mathbb{R}^d\) (\(0\le i\le n-1\)) with fixed dimension \(d\). Then the first derivative bound
\[
B = n\left(\max_{0\le i\le n-1} q_i\right)\left(\max_{0\le i\le n-1} \|\Delta_i\|\right),\qquad
q_i = \max\{\omega_{i+1}/\omega_i,\; \omega_i/\omega_{i+1}\},
\]
can be computed in \(\mathcal{O}(n)\) time. Moreover, the adjacent weight ratio \(\Omega\) itself is obtained in the same pass.
\end{lemma}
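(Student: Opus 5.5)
We argue directly from the definitions by exhibiting a single-pass procedure and counting its operations, in the standard real-RAM model where each arithmetic operation on reals, each comparison, and each square root takes constant time. The plan is a loop over \(i=0,\dots,n-1\) that keeps two running maxima, \(M_q\) for the adjacent weight ratios and \(M_\Delta\) for the control-vector lengths. We initialise \(M_q = 1\), which is harmless since every \(q_i\ge 1\), and \(M_\Delta = 0\).

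In iteration \(i\) the loop does three things.
\begin{enumerate}[label=(\roman*)]
\item It computes \(r=\omega_{i+1}/\omega_i\); this is well defined because all weights are positive.
\item It sets \(q_i=\max\{r,1/r\}\), which costs one division and one comparison. Note that \(\omega_i/\omega_{i+1}=1/r\), so the second ratio needs no further access to the weights.
\item It evaluates \(\|\Delta_i\|=\bigl(\sum_{k=1}^d \Delta_{i,k}^2\bigr)^{1/2}\) using \(d\) multiplications, \(d-1\) additions and one square root, and then updates \(M_q\gets\max\{M_q,q_i\}\) and \(M_\Delta\gets\max\{M_\Delta,\|\Delta_i\|\}\).
\end{enumerate}
If \(\Delta_i\) is not given but must be formed from the control points as \(\mathbf{P}_{i+1}-\mathbf{P}_i\), that adds another \(d\) subtractions. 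Since \(d\) is fixed, each iteration therefore costs \(\mathcal{O}(d)=\mathcal{O}(1)\) operations.

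After the loop, a straightforward induction on \(i\) shows that \(M_q=\max_i q_i\) and \(M_\Delta=\max_i\|\Delta_i\|\). Forming \(B=n\,M_q\,M_\Delta\) then takes two more multiplications. The total cost is \(n\cdot\mathcal{O}(1)+\mathcal{O}(1)=\mathcal{O}(n)\).

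For the last sentence of the statement, compare the definition of \(\Omega\) in \eqref{eq:omega-def} with \(q_i\): they give \(\Omega=\max_i q_i\) verbatim. Hence \(\Omega=M_q\) is already available once the same loop has finished, with no extra pass.

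There is essentially no obstacle here. The only point needing care is to fix the computational model, so that the square root and the fixed dimension \(d\) count as constant cost. If one prefers to avoid square roots, one can track \(\max\|\Delta_i\|^2\) instead and take a single square root at the end. One can also note that \(\Omega(n)\) is a matching lower bound, since every weight and every \(\Delta_i\) must be read, so linear time is optimal.
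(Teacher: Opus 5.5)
Your proposal is correct and takes essentially the same approach as the paper: a single scan over \(i\) computing each \(q_i\) and \(\|\Delta_i\|\) in constant time for fixed \(d\), maintaining running maxima, and reading off \(\Omega=\max_i q_i\) in the same pass. The only difference is that you spell out the computational model and the per-iteration operation count, which the paper's short proof leaves implicit.
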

\begin{proof}
The quantities \(q_i\) and \(m_i=\|\Delta_i\|\) are each computed in constant time. A single scan over \(i=0,\dots,n-1\) suffices to compute both \(\max_i q_i\) and \(\max_i m_i\). The total time is therefore linear in \(n\).
\end{proof}

Thus, for any rational Bézier curve of degree~6, the certified first derivative bound can be evaluated extremely quickly, an advantage for algorithms that require on‑the‑fly derivative estimation, such as adaptive rendering, collision detection, or curve fairing. The main result is stated as follows.

\begin{theorem}[Main Theorem]
For any rational Bézier curve of degree \(n=6\) with positive weights and control points in \(\mathbb{R}^d\) (\(d\) fixed), the first derivative satisfies the bound
\[
\|\mathbf{R}'(t)\| \le 6\Omega \max_{0\le i\le 5}\|\Delta_i\|,
\]
where \(\Omega\) is the maximal adjacent weight ratio defined in \eqref{eq:omega-def}. By Lemma~\ref{lem:complexity}, the right‑hand side can be evaluated in \(\mathcal{O}(1)\) time, which is optimal for a fixed degree.
\end{theorem}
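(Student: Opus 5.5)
We would start from the Sederberg--Wang form of the derivative. Write $W(t)=\sum_i\omega_iB_i^n(t)$ and use the identity
\[
\mathbf{R}'(t)=\frac{n}{W(t)^2}\sum_{0\le i<j\le n}\omega_i\omega_j\,(j-i)\,B_{i}^{n}(t)B_{j}^{n}(t)\,\frac{\mathbf{P}_j-\mathbf{P}_i}{?}\cdots,
\]
or more conveniently
\[
\mathbf{R}'(t)=\frac{n}{W^2}\sum_{i<j}\omega_i\omega_j\,c_{ij}(t)\,(\mathbf{P}_j-\mathbf{P}_i),
\]
with explicit nonnegative coefficients $c_{ij}$. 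Since $\mathbf{P}_j-\mathbf{P}_i=\sum_{k=i}^{j-1}\Delta_k$, the triangle inequality gives
\[
\|\mathbf{R}'\|\le n\max\|\Delta_k\|\cdot F,\qquad
F=\frac{\sum_{i<j}(j-i)\,\omega_i\omega_j\,\tilde c_{ij}}{W^2}.
\]
The quantity $F$ is a variance-like expression: it is the variance of the index under the probability weights $p_i=\omega_iB_i^n/W$, suitably scaled by $t(1-t)$. Concretely,
\[
F=\frac{\sum_i i^2p_i-\bigl(\sum_i ip_i\bigr)^2}{n\,t(1-t)},
\]
which is exactly the form obtained when all $\Delta_k$ are equal. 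The theorem therefore reduces to showing $F\le\Omega$ for every $t\in(0,1)$ and every weight vector whose adjacent ratios $r_i=\omega_{i+1}/\omega_i$ lie in $[1/\Omega,\Omega]$. By homogeneity we normalize $\omega_0=1$, so that $\omega_k=\prod_{i<k}r_i$. We also absorb $t$ via $u=t/(1-t)>0$, which turns $p_i$ into $\binom ni\omega_iu^i$ normalized. The task becomes maximizing $G(r_0,\dots,r_5;u)$ over the compact box $[1/\Omega,\Omega]^6$ for each fixed $u,\Omega$.

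The structural step is a block argument showing that a maximizer lies on the one-dimensional skeleton of the box, i.e.\ at most one $r_i$ is interior. Suppose two coordinates $r_a,r_b$ are interior at a maximum. We would examine $G$ along the two-parameter family obtained by varying them. Each factor $r_i$ multiplies a contiguous block $\omega_{i+1},\dots,\omega_n$, so $G$ restricted to $(\log r_a,\log r_b)$ is a ratio of quadratic forms in block sums $S_0,S_1,S_2$ (the sums of the mass, first moment and second moment of each block). From the two stationarity conditions we would derive the identity that the Hessian on this 2-plane is indefinite or degenerate at any interior critical point. The mechanism is that the variance, viewed in the exponential-family coordinates $\log r$, has stationary points where the block-conditional means coincide, and the second variation there is then nonnegative in some direction. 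Hence no strict interior local maximum exists in two free coordinates, and the maximum can be pushed to a face of lower dimension. Iterating this leaves only vertices ($r_i\in\{\Omega,1/\Omega\}$, giving $2^6$ sign patterns, halved by the symmetry $i\mapsto n-i$, $u\mapsto1/u$) and edges (one free $r_i=s$).

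Each remaining case yields a polynomial inequality. On a vertex, clearing denominators in $\Omega W^2-\text{numerator}\ge0$ gives $\Xi(u,\Omega)\ge0$ for $u>0$, $\Omega\ge1$. On an edge, we impose the critical-point equation $\partial_sG=0$ together with $s\in[1/\Omega,\Omega]$, which is the hypothesis $\Phi$. Alternatively, we could keep $s$ as a third variable and quantify over it. Each formula $\forall u>0,\Omega\ge1:\Phi\Rightarrow\Xi\ge0$ is then decided by CAD, using Mathematica's \texttt{Resolve}. Substituting $\Omega=1$ sanity-checks equality, since then $F=1$ exactly.

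The main obstacle is the block argument. Proving rigorously that no maximizer has two or more interior ratios requires a clean sign identity for the $2\times2$ Hessian at a critical point, valid for all $u$. If a direct identity fails, we would fall back on showing that $G$ is quasi-convex in each single $\log r_i$ after fixing the others, or on handling pairs of coordinates by an explicit QE computation in three variables. The final CAD verification is routine but may be heavy for the edge cases, and we expect the computation to be feasible for $n=6$.
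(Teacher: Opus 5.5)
Your reduction to the variance inequality is correct, and it takes a slightly different (arguably cleaner) route than the paper. Antisymmetrizing the quotient rule gives $\mathbf{R}'(t)=\frac{1}{t(1-t)}\sum_{i<j}p_ip_j(j-i)(\mathbf{P}_j-\mathbf{P}_i)$ with visibly nonnegative coefficients, and $\sum_{i<j}p_ip_j(j-i)^2=\mathrm{Var}_p(I)$. So you land on exactly the paper's $q(u;\mathbf{r})$ without the Abel-summation sign analysis of the $c_k$.

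The genuine gap is the structural step. It is the heart of the proof, and you only hope to obtain it from a Hessian identity.

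\begin{itemize}
\item The mechanism you describe is not the right one. With $\theta_\ell=\log r_\ell$ one has $\partial V/\partial\theta_\ell=\mathrm{Cov}\bigl((I-\bar\mu)^2,\mathbf{1}_{\{I\ge \ell\}}\bigr)$. Stationarity therefore says that each block has conditional second moment about $\bar\mu$ equal to $V$. Block-conditional means are strictly increasing and never coincide.
\item A second-order argument would not exclude maxima at which the Hessian is degenerate.
\item Quasi-convexity in a single $\log r_i$ is false. On an edge, $V$ is a strictly concave quadratic in the mixing weight $\lambda=xR/(L+xR)$, and interior edge maxima really occur; this is precisely what breaks $n=7$.
\item A QE treatment of pairs of coordinates must carry the remaining, possibly interior, coordinates as parameters. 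It is therefore neither a three-variable problem nor enough to ``iterate''.
\end{itemize}

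The missing idea is first-order. It shows there are no critical points at all in the relative interior of a face with $m\ge 2$ free coordinates $r_{k_1},\dots,r_{k_m}$. Set $k_0=0$ and $k_{m+1}=n+1$, and difference the stationarity equations $\sum_{i\ge k_\alpha}(i-\bar\mu)^2p_i=V H_{k_\alpha}$. For each of the $m+1\ge 3$ blocks $B_b=\{k_b,\dots,k_{b+1}-1\}$, with conditional mean $\mu_b$ and second moment $m_b$, this gives $m_b-2\bar\mu\,\mu_b+\bar\mu^2=V$. So all points $(\mu_b,m_b)$ lie on one line of slope $2\bar\mu$. But
\[
\frac{m_{b+1}-m_b}{\mu_{b+1}-\mu_b}=\frac{\sum_{i\in B_b,\,j\in B_{b+1}}(j-i)(i+j)\,p_ip_j}{\sum_{i\in B_b,\,j\in B_{b+1}}(j-i)\,p_ip_j}
\]
is a positive weighted average of $i+j$. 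It is at most $k_1+k_2-2$ for $(B_0,B_1)$ and at least $k_1+k_2$ for $(B_1,B_2)$, a contradiction.

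A maximizer lies in the relative interior of its minimal face and is critical there. Hence it is a vertex or lies on an edge. On an edge, the law of total variance gives the maximum in closed form, so each case becomes a two-variable QE problem in $(u,\Omega)$. Keeping the free ratio as a third variable, as you suggest, would also be legitimate, just heavier.
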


\section{A reformulation using adjacent ratios}
\label{sec:reform}
We introduce the adjacent weight ratios \(r_k = \omega_k/\omega_{k-1}\) for \(k=1,\dots,n\). This implies
\begin{equation}\label{eq:product}
  \omega_i = \omega_0 \prod_{j=1}^{i} r_j,
\end{equation}
with the convention that the empty product equals \(1\). The constraint on the weights translates to \(r_k \in [\Omega^{-1},\Omega]\), and the natural domain for the ratio vector \(\mathbf{r}=(r_1,\dots,r_n)\) is the \(n\)-dimensional box \(\mathcal{B} = [\Omega^{-1}, \Omega]^n\).

Using the reparametrization \(u = t/(1-t) \in (0,\infty)\), we have \(t = u/(1+u)\), \(\dd t = \dd u/(1+u)^2\), and the Bernstein basis functions can be written as
\[
B_i^n(t) = \binom{n}{i} t^i(1-t)^{n-i} = \binom{n}{i} \frac{u^i}{(1+u)^n}.
\]
Substituting \eqref{eq:product} into \eqref{eq:curve} and clearing the common factor \((1+u)^{-n}\) yields
\begin{equation}\label{eq:R-u}
  \mathbf{R}(t) = \frac{ \sum_{i=0}^{n} a_i(u;\mathbf{r}) \mathbf{P}_i }{ \sum_{i=0}^{n} a_i(u;\mathbf{r}) },
\qquad 
a_i(u;\mathbf{r}) = \binom{n}{i} \Bigl(\prod_{j=1}^{i} r_j\Bigr) u^{\,i}.
\end{equation}
Define the normalized weights
\[
p_i = \frac{a_i(u;\mathbf{r})}{\sum_{j=0}^{n} a_j(u;\mathbf{r})},
\qquad \sum_{i=0}^{n} p_i = 1.
\]

To bound the first derivative \(\mathbf{R}'(t)\), we differentiate with respect to \(u\) first. Write \(Z = \sum_{i=0}^{n} a_i\). Since \(a_i'(u) = \frac{i}{u}a_i\), we have
\[
Z'(u) = \frac{1}{u}\sum_{i=0}^{n} i a_i,\qquad
\bigl(\sum_i a_i \mathbf{P}_i\bigr)' = \frac{1}{u}\sum_{i=0}^{n} i a_i \mathbf{P}_i.
\]
Applying the quotient rule,
\[
\frac{\dd}{\dd u}\mathbf{R} = 
\frac{ \bigl(\frac{1}{u}\sum_i i a_i \mathbf{P}_i\bigr) Z - \bigl(\sum_i a_i \mathbf{P}_i\bigr) \bigl(\frac{1}{u}\sum_i i a_i\bigr) }{Z^2}
= \frac{1}{u} \sum_{i=0}^{n} \bigl(i - \bar{\mu}\bigr) \frac{a_i}{Z} \mathbf{P}_i,
\]
where \(\bar{\mu} = \sum_{j=0}^{n} j p_j = \frac{1}{Z}\sum_j j a_j\). Using \(\dd u/\dd t = (1+u)^2\), the chain rule gives
\begin{equation}\label{eq:dR}
\mathbf{R}'(t) = \frac{(1+u)^2}{u} \sum_{i=0}^{n} \bigl(i - \bar{\mu}\bigr) p_i \mathbf{P}_i.
\end{equation}

Observe that \(\sum_{i=0}^{n} (i-\bar{\mu})p_i = 0\). We apply Abel's summation by parts~\cite{Apostol1974} \cite{Chang2003}to rewrite the sum in terms of differences \(\Delta_k = \mathbf{P}_{k+1}-\mathbf{P}_k\):
\[
\sum_{i=0}^{n} (i-\bar{\mu})p_i \mathbf{P}_i = \sum_{k=0}^{n-1} c_k \Delta_k,
\qquad c_k = \sum_{i=k+1}^{n} (i-\bar{\mu})p_i.
\]
Taking norms and using the triangle inequality,
\[
\|\mathbf{R}'(t)\| \le \frac{(1+u)^2}{u} \sum_{k=0}^{n-1} |c_k| \|\Delta_k\|.
\]

We now show that every \(c_k\) is non‑negative. From \(\sum_{i=0}^{n} (i-\bar{\mu})p_i = 0\) we obtain the identity \(c_k = -\sum_{i=0}^{k} (i-\bar{\mu})p_i\). Define the partial sums \(S_k = \sum_{i=0}^{k} (i-\bar{\mu})p_i\) for \(k = -1,0,\dots,n\), with \(S_{-1}=0\). Then \(c_k = -S_k\). The increments are \(S_k - S_{k-1} = (k-\bar{\mu})p_k\). Since \(p_k > 0\), the sign of the increment is the sign of \(k-\bar{\mu}\). Consequently, the sequence \(\{S_k\}\) decreases as long as \(k < \bar{\mu}\) and increases once \(k > \bar{\mu}\). Its initial value is \(S_0 = -\bar{\mu}p_0 \le 0\), and its final value is \(S_n = \sum_{i=0}^{n}(i-\bar{\mu})p_i = 0\). Thus the entire sequence satisfies \(S_k \le 0\) for all \(k\), which implies \(c_k = -S_k \ge 0\). Hence \(|c_k| = c_k\) and
\[
\sum_{k=0}^{n-1} |c_k| \|\Delta_k\| = \sum_{k=0}^{n-1} c_k \|\Delta_k\| \le \Bigl(\sum_{k=0}^{n-1} c_k\Bigr) \max_{0\le k\le n-1}\|\Delta_k\|.
\]

The sum of the coefficients simplifies nicely:
\[
\sum_{k=0}^{n-1} c_k = \sum_{k=0}^{n-1}\sum_{i=k+1}^{n} (i-\bar{\mu})p_i = \sum_{i=1}^{n} i(i-\bar{\mu})p_i = \sum_{i=0}^{n} i(i-\bar{\mu})p_i.
\]
Adding and subtracting \(\bar{\mu}\sum_i (i-\bar{\mu})p_i = 0\) yields
\[
\sum_{i=0}^{n} i(i-\bar{\mu})p_i = \sum_{i=0}^{n} (i-\bar{\mu})^2 p_i.
\]
Consequently,
\begin{equation}\label{eq:main-bound}
\|\mathbf{R}'(t)\| \le n\, q(u;\mathbf{r}) \max_{0\le k\le n-1}\|\Delta_k\|,
\end{equation}
where we define the key scalar function
\begin{equation}\label{eq:q-def}
q(u;\mathbf{r}) = \frac{(1+u)^2}{nu}\, \sum_{i=0}^{n} \bigl(i - \bar{\mu}\bigr)^2 p_i,
\qquad \bar{\mu} = \sum_{j=0}^{n} j p_j.
\end{equation}

The original conjecture \(\|\mathbf{R}'(t)\|\le n\Omega \max\|\Delta_i\|\), if it were true for a given degree \(n\), would be a consequence of the global inequality \(q(u;\mathbf{r}) \le \Omega\) for all \(u>0\) and all \(\mathbf{r}\in\mathcal{B}\). It is this inequality that was conjectured to hold for all \(n\), but was disproved for \(n\ge 7\) by Shi~\cite{Shi2026}. In the present paper we prove that it does hold for the specific case \(n = 6\).

\begin{proposition}\label{prop:goal}
For degree \(n=6\), the inequality
\[
q(u;\mathbf{r}) \le \Omega
\]
holds for all \(u>0\) and for all \(\mathbf{r} \in \mathcal{B} = [\Omega^{-1},\Omega]^{n}\).
\end{proposition}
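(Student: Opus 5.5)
The plan follows the strategy announced in the introduction: reduce the maximization of \(q(u;\mathbf{r})\) over \(u>0\) and \(\mathbf{r}\in\mathcal{B}\) to the one-dimensional skeleton of the box, then certify finitely many two-variable polynomial inequalities by quantifier elimination.

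First I fix \(\Omega\ge 1\) and \(u>0\) and study \(\mathbf{r}\mapsto q(u;\mathbf{r})\) on \(\mathcal{B}\). I pass to logarithmic coordinates \(s_k=\log r_k\in[-\log\Omega,\log\Omega]\). Then \(p_i\propto \binom{n}{i}u^i e^{s_1+\dots+s_i}\) is an exponential family in the "tail indicators" \(T_k=\mathbf{1}[i\ge k]\), since \(s_1+\dots+s_i=\sum_k s_k T_k\). The variance \(V=\sum_i(i-\bar\mu)^2p_i\) is therefore a central moment, and its derivative is
\[
\partial V/\partial s_k=\operatorname{Cov}\bigl((i-\bar\mu)^2,T_k\bigr),
\]
a third-order mixed cumulant. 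The block argument I would try is the following. Group consecutive coordinates \(s_k\) that lie strictly inside \((-\log\Omega,\log\Omega)\) into maximal interior blocks. At an interior critical point in a block of length \(\ge 2\), the equations \(\operatorname{Cov}((i-\bar\mu)^2,T_k)=0\) for consecutive \(k\) force \(\operatorname{E}[(i-\bar\mu)^2\mid i\ge k]\) to take a prescribed value. I would then show that the second variation along the direction moving two adjacent interior coordinates in opposite senses, \(s_k\mapsto s_k+\varepsilon\), \(s_{k+1}\mapsto s_{k+1}-\varepsilon\) (which only reweights \(p_k\)), is nonnegative. Such a point is then not a strict maximum, so the maximum can be pushed until one of the two coordinates hits the boundary. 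Iterating, at most one free coordinate remains, so the maximum of \(q\) is attained on a vertex or an edge of \(\mathcal{B}\).

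For \(n=6\) this leaves finitely many configurations: each \(r_k\in\{\Omega^{-1},\Omega\}\) except possibly one free index \(j\) with \(r_j=x\in[\Omega^{-1},\Omega]\). The reversal symmetry \(i\mapsto n-i\), \(u\mapsto 1/u\), \(r_k\mapsto 1/r_{n+1-k}\) leaves \(q\) invariant and roughly halves the list.

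On a vertex, \(q\le\Omega\) becomes, after clearing the positive denominator \(nuZ^2\), a single polynomial inequality \(\Xi(u,\Omega)\ge 0\) for \(u>0\), \(\Omega\ge1\). I verify each of these with Mathematica's \texttt{Resolve}, in the form \(\forall u>0,\Omega\ge1:\Xi\ge0\). On an edge, the function is one-variable in \(x\). Either the endpoint case reduces to vertices, or the interior critical condition \(\partial_x q=0\) holds; in that case I eliminate \(x\) (resultant of the numerator of \(q-\Omega\) and of \(\partial_x q\), or directly a three-variable QE). This yields the promised implications \(\Phi(u,\Omega)\Rightarrow\Xi(u,\Omega)\ge0\). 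As a sanity check, at \(\Omega=1\) the bound is equality-compatible, since \(q=1\) for the binomial distribution, so the polynomials vanish there. I would factor out \((\Omega-1)\) powers before calling QE to help CAD.

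The main obstacle is the reduction step: proving rigorously that no maximum lies in the relative interior of a face of dimension \(\ge2\). The second-variation sign is not obviously true globally. If the pairwise argument fails, I would fall back on showing that the Hessian of \(V\) restricted to any two-dimensional interior block has a nonnegative direction at critical points, using the explicit cumulant identities. Failing that, I would certify the two-dimensional faces directly by QE for \(n=6\). The failure for \(n=7\) shows that this step must genuinely use the degree, at least through the final polynomial checks.
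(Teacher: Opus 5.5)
Your overall plan is the paper's: reduce to vertices and edges, then certify two-variable polynomial conditions with \texttt{Resolve}, using reversal symmetry. Your formula $\partial V/\partial s_k=\operatorname{Cov}((i-\bar\mu)^2,T_k)$ is its equation (3). The gap is the reduction step. The mechanism you propose for it is not just unproven; it gives the wrong sign. Moving $s_k\mapsto s_k+\varepsilon$, $s_{k+1}\mapsto s_{k+1}-\varepsilon$ rescales only $a_k$. So $p=(1-\lambda)\nu+\lambda\pi_k$, where $\nu$ is a fixed distribution on $\{0,\dots,n\}\setminus\{k\}$, $\pi_k$ is the point mass at $k$, and $\lambda'(\varepsilon)=\lambda(1-\lambda)>0$. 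By the law of total variance, $V=g(\lambda)=(1-\lambda)\sigma_\nu^2+\lambda(1-\lambda)(k-\mu_\nu)^2$, which is a concave quadratic. At a critical point the directional derivative $g'(\lambda)\lambda'(\varepsilon)$ vanishes, so $g'(\lambda)=0$. This rules out $k=\mu_\nu$, since then $g'=-\sigma_\nu^2<0$. Hence
\[
\frac{\mathrm{d}^2V}{\mathrm{d}\varepsilon^2}=g''(\lambda)\,(\lambda')^2=-2(k-\mu_\nu)^2\lambda^2(1-\lambda)^2<0 .
\]
So the second variation you want to be nonnegative is strictly negative at every critical point. It can be ``nonnegative'' only vacuously, once you already know no critical point exists. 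Even a genuinely nonnegative second variation would not show the point fails to be a maximum, nor justify pushing to the boundary. Your argument also treats only adjacent interior coordinates. A face with $r_1,r_3$ free and $r_2$ frozen is never handled, so iterating does not leave at most one free coordinate. Your last fallback, certifying two-dimensional faces by QE, leaves faces of dimension $3$ to $6$ untouched.

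The missing idea is that the first-order conditions are already inconsistent, for every degree, and you had essentially written them down. At a critical point they read $\sum_{i\ge k_\alpha}(i-\bar\mu)^2p_i=VH_{k_\alpha}$ for each free index $k_1<\dots<k_m$. Add the trivial identities for $k_0=0$ and $k_{m+1}=n+1$, and subtract consecutive ones. On each block $B_b=\{k_b,\dots,k_{b+1}-1\}$ you get $\sum_{i\in B_b}(i-\bar\mu)^2p_i=V\sum_{i\in B_b}p_i$; whether the free indices are adjacent is irrelevant. Equivalently, the block means and second moments satisfy $m_b-2\bar\mu\mu_b+\bar\mu^2=V$, so all points $(\mu_b,m_b)$ lie on one line of slope $2\bar\mu$. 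But $(m_1-m_0)/(\mu_1-\mu_0)$ is a weighted average of $i+j$ over $i\in B_0$, $j\in B_1$, hence at most $k_1+k_2-2$. Likewise $(m_2-m_1)/(\mu_2-\mu_1)\ge k_1+k_2$. So for $m\ge 2$ there is no critical point in $\operatorname{relint}(F)$ at all. A maximizer in $\operatorname{relint}(F)$ must be critical for $q|_F$, so every maximizer lies on a vertex or an edge. No second-order or pushing argument is needed, and the degree enters only through the final finite checks.

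On the edges, your direct three-variable QE over $x\in[\Omega^{-1},\Omega]$ is a legitimate alternative, and it even subsumes the vertex checks. The resultant route would need an extra connectivity argument before it yields an implication. The paper instead uses the same mixture structure as above: on an edge, $V$ is a concave quadratic in the mixing weight of the two blocks split at the free index. This gives the interior maximum in closed form, $\lambda_*=(D^2+\delta)/(2D^2)$, with explicit admissibility guards, so each \texttt{Resolve} call stays in the two variables $u,\Omega$.
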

We prove this proposition in the subsequent sections. Note that for fixed \(n\), \(\mathbf{r}\mapsto q(u;\mathbf{r})\) is continuous on the compact set \(\mathcal{B}\) and therefore attains its global maximum.

\section{Structural properties and reduction}
\label{sec:structure}
We first establish a symmetry property that halves the number of edge cases to be checked.

\begin{lemma}[Reversal Symmetry]\label{lem:reversal}
For all \(u>0\) and positive ratio vectors \(\mathbf{r}=(r_1,r_2,\cdots,r_n)\),
\[
q(u; \mathbf{r}) = q(u^{-1};\, \mathbf{r}^{-1}),
\]
where \(\mathbf{r}^{-1} = (r_n^{-1},r_{n-1}^{-1},\cdots,r_1^{-1})\).
\end{lemma}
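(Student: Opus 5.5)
The plan is to show that the reversal $i\mapsto n-i$ of the index, together with $u\mapsto u^{-1}$ and $\mathbf{r}\mapsto\mathbf{r}^{-1}$, transforms the normalized weights $p_i$ into the reversed weights $p_{n-i}$. Since the variance $\sum_i (i-\bar\mu)^2p_i$ is invariant under the reflection $i\mapsto n-i$, and the prefactor $(1+u)^2/(nu)$ is invariant under $u\mapsto u^{-1}$, the identity follows.

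Concretely, write $\tilde{\mathbf r}=\mathbf r^{-1}$, so $\tilde r_j = r_{n+1-j}^{-1}$, and set $\tilde u=u^{-1}$. For the weights $\tilde a_i=a_i(\tilde u;\tilde{\mathbf r})$ we compute
\[
\tilde a_i=\binom{n}{i}\Bigl(\prod_{j=1}^{i} r_{n+1-j}^{-1}\Bigr)u^{-i}
=\binom{n}{n-i}\Bigl(\prod_{k=n-i+1}^{n} r_k\Bigr)^{-1}u^{-i}.
\]
Multiplying by the common factor $C=u^{n}\prod_{k=1}^{n}r_k$, which is independent of $i$, gives
\[
C\,\tilde a_i=\binom{n}{n-i}\Bigl(\prod_{k=1}^{n-i}r_k\Bigr)u^{n-i}=a_{n-i}(u;\mathbf r).
\]
Since the $p_i$ are normalized, the common factor cancels and we get $\tilde p_i=p_{n-i}$.

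It follows that $\tilde{\bar\mu}=\sum_i i\,p_{n-i}=\sum_j (n-j)p_j=n-\bar\mu$. Hence
\[
\sum_i (i-\tilde{\bar\mu})^2\tilde p_i=\sum_j \bigl((n-j)-(n-\bar\mu)\bigr)^2p_j=\sum_j (j-\bar\mu)^2p_j .
\]
Finally, $(1+u^{-1})^2/(n u^{-1})=(u+1)^2/(nu)$. Multiplying the two invariant factors yields $q(u^{-1};\mathbf r^{-1})=q(u;\mathbf r)$.

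There is no real obstacle here; the only point requiring care is the index bookkeeping in the product $\prod_{j=1}^{i}\tilde r_j$, namely checking that it equals the reciprocal of the tail product $r_{n-i+1}\cdots r_n$. One should also note that $\mathbf r\in\mathcal B$ implies $\mathbf r^{-1}\in\mathcal B$, although the lemma as stated holds for all positive $\mathbf r$.
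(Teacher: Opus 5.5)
Your proof is correct and is essentially the paper's argument made explicit. The paper only cites the Bernstein symmetry $B_i^n(t)=B_{n-i}^n(1-t)$ together with reversal of the weights, which sends $r_k$ to $r_{n+1-k}^{-1}$ and corresponds to $u\mapsto u^{-1}$ in the variable $u=t/(1-t)$; your computation of $\tilde p_i=p_{n-i}$, the reflected mean $n-\bar\mu$, and the invariance of $(1+u)^2/(nu)=1/(n\,t(1-t))$ supplies exactly the details that the paper's one-line proof leaves implicit.
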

\begin{proof}
This follows from the symmetry of the Bernstein basis, \(B_i^n(t)=B_{n-i}^n(1-t)\), and the reparametrization rule for rational B\'ezier curves.
\end{proof}
This symmetry implies that edges with free coordinate \(r_n\) can be transformed to edges with free coordinate \(r_1\), and so on. It suffices to analyze edges with free coordinates \(r_k\) for \(k=1,\dots,\lceil n/2\rceil\).

\subsection{Absence of critical points in high-dimensional faces}
\label{sec:critical}
The core structural argument is that the function \(q\) admits no local maxima in the interior of any face of dimension two or greater.

\begin{proposition}\label{prop:no-highdim-critical}
Let \(F \subset \mathcal{B}\) be a face with \(m \ge 2\) free coordinates. For any fixed \(u > 0\), the restriction of \(q(u; \cdot)\) to \(F\) has no critical point in the relative interior of \(F\).
\end{proposition}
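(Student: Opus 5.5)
We work in logarithmic coordinates $s_j=\log r_j$. The box $\mathcal{B}$ becomes the cube $[-\log\Omega,\log\Omega]^n$, and a critical point of $q(u;\cdot)$ restricted to a face in the $r$-variables is the same thing as a critical point in the $s$-variables. Since $u$ is fixed, the prefactor $(1+u)^2/(nu)$ is a positive constant. So it suffices to study the variance $V=\sum_i (i-\bar\mu)^2p_i$ of the random index $X$ with distribution $(p_0,\dots,p_n)$ from \eqref{eq:R-u}.

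\emph{Step 1: gradient as a covariance.} By \eqref{eq:product}, $\log a_i = \log\binom{n}{i}+i\log u+\sum_{j\le i}s_j$, so $\partial \log a_i/\partial s_j=\mathbf{1}\{i\ge j\}$. This is an exponential family, hence for any $f$ we have $\partial_{s_j}\mathbb{E}[f(X)]=\mathrm{Cov}(f(X),\mathbf{1}\{X\ge j\})$. Applying this to $V=\mathbb{E}X^2-(\mathbb{E}X)^2$ gives
\[
\frac{\partial V}{\partial s_j}=\mathrm{Cov}\bigl((X-\bar\mu)^2,\mathbf{1}\{X\ge j\}\bigr)=\sum_{i=j}^{n}p_i\,g(i)=:T_j,
\qquad g(i)=(i-\bar\mu)^2-V .
\]
Thus a relative-interior critical point of a face with free index set $J$, $|J|=m\ge2$, requires $T_j=0$ for all $j\in J$.

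\emph{Step 2: sign pattern of the tail sums.} We have $\sum_i p_ig(i)=0$, so $T_0=0$, and $T_{n+1}=0$ by convention. The consecutive differences are $T_j-T_{j+1}=p_jg(j)$ with $p_j>0$. The function $g$ is a convex quadratic in $i$, so on the integers $0,\dots,n$ its sign pattern is $(+,\dots,+,-,\dots,-,+,\dots,+)$, with possible zeros at the transitions. It cannot be nonnegative everywhere, since its $p$-mean is $0$ and $V>0$. Now read $T_j$ from $j=n+1$ downward. It starts at $0$, is nondecreasing while $g\ge0$, then strictly decreasing on the block where $g<0$, then nondecreasing again until it returns to $T_0=0$. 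A function with this up--down--up profile that vanishes at both ends can have at most one sign change in $\{1,\dots,n\}$. So, apart from degenerate coincidences, at most one index $j\in\{1,\dots,n\}$ has $T_j=0$, and the $m\ge2$ equations of Step 1 cannot all hold.

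To make this precise we must also rule out two further scenarios: $T$ vanishing on an initial or final plateau, and $T$ vanishing at two consecutive indices. The first needs $T_j=0$ at a $j$ with $g(i)=0$ for all $i\ge j$ (or $i<j$), which forces $g$ to be identically zero on a tail. A nonzero quadratic has at most two integer roots, so this tail can contain only one or two points, and we must show such configurations are incompatible with $\sum_i p_ig(i)=0$ and positivity of the $p_i$. The second needs $T_j=T_{j+1}=0$, that is, $g(j)=0$ together with $T_{j+1}=0$.

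\emph{Main obstacle.} The hard part is exactly these degenerate cases, where an integer root of $g$ sits at the crossing point so that two consecutive tail sums vanish simultaneously. Our first attempt is to use the strictness of the monotone segments. On the negative block $T$ is strictly decreasing, and $g(i)=0$ can occur at most at the two integer roots of the quadratic. Hence two zeros of $T$ force the zero set of $T$ to be a pair $\{j,j+1\}$ with $g(j)=0$, and $T$ positive before it and negative after it (or the reverse). We then translate $g(j)=0$ and $T_{j+1}=0$ into the polynomial conditions $(j-\bar\mu)^2=V$ and $\sum_{i>j}p_i\bigl((i-\bar\mu)^2-V\bigr)=0$. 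We try to show that these, combined with $T_0=0$ and positivity, give a contradiction: for instance, the positive part of $T$ on $i>j$ must be cancelled entirely by indices $i<j$, where $g$ changes sign only once.

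If this fails, the fallback is to weaken the claim slightly, which suffices for the reduction. Such a degenerate point is a critical point, but the Hessian restricted to the two free directions $j,j+1$ is indefinite there, computed via third cumulants by differentiating $T_j$ once more. Hence it is not a local maximum, so the maximum of $q$ over $\mathcal{B}$ still lies on faces of dimension at most one.
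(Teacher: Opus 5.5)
\textbf{Verdict: genuine gap (fixable).} Your Step~1 is correct; it is the paper's formula~(3) in another form, since $r_\ell\,\partial V/\partial r_\ell=T_\ell$. The single-crossing idea in Step~2 is a sound alternative route. The problem is that the critical-point equations need $T_j=0$ for two distinct $j\in\{1,\dots,n\}$, while your up--down--up profile only bounds the number of \emph{sign changes}. The step turning ``at most one sign change'' into ``at most one zero'' is exactly what you leave as ``we try to show'', so the proposition is not proved as written.

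The fallback does not rescue it, for three reasons:
\begin{itemize}
\item the statement asserts that no critical point exists, not merely that none is a local maximum;
\item the indefiniteness of the Hessian is asserted, not computed;
\item it only addresses two adjacent free directions.
\end{itemize}
Your description of the degenerate cases is also off. A one-point plateau is \emph{not} incompatible with $\sum_i p_ig(i)=0$ and $p_i>0$: one can have $g(n)=0$, hence $T_n=0$. And two zeros need not be consecutive: zeros at $j=1$ and $j=n$ would come from $g(0)=g(n)=0$.

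\textbf{How to close it.} Let $\{i:g(i)<0\}=\{a,\dots,b\}$. This is a nonempty run of consecutive integers: if it were empty, $g\ge0$ with zero $p$-mean would force $g$ to vanish at $n+1\ge3$ integers.
\begin{itemize}
\item For $b+1\le j\le n$, every $i\ge j$ has $g(i)\ge0$, so $T_j\ge0$. Equality forces $g\equiv0$ on $\{j,\dots,n\}$. Since $g>0$ beyond its larger root $\bar\mu+\sqrt V$, this gives $j=n=\bar\mu+\sqrt V$, i.e.\ $j=b+1$.
\item Symmetrically, $T_j=-\sum_{i<j}p_ig(i)\le0$ for $1\le j\le a$, with equality only if $j=1=a$.
\item So every zero of $T$ in $\{1,\dots,n\}$ lies in $\{a,\dots,b+1\}$. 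There $T_{j+1}-T_j=-p_jg(j)>0$, so $T$ is strictly increasing and has at most one zero.
\end{itemize}
Your consecutive-zero scenario is empty for the same reason. If $T_j=T_{j+1}=0$, then $g(j)=0$, so $j$ is a root of $g$. Then either $T_{j+1}>0$ (if $j<n$ is the larger root) or $T_j<0$ (if $j\ge1$ is the smaller one).

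\textbf{Comparison with the paper.} The paper subtracts consecutive critical equations to get $\sum_{i\in B_b}p_ig(i)=0$ on three consecutive blocks. It reads this as collinearity of the points $(\mu_b,m_b)$ on a line of slope $2\bar\mu$. It then refutes collinearity by strict convexity of $i\mapsto i^2$: the slope between $B_0,B_1$ is at most $k_1+k_2-2$, while between $B_1,B_2$ it is at least $k_1+k_2$. No degenerate cases ever arise that way.

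Your route, once repaired, gives more. At every point of the box, $(T_1,\dots,T_n)$ consists of negative entries, then at most one zero, then positive entries. By the first-order (Karush--Kuhn--Tucker) conditions on the box, every maximizer then has the form $(\Omega^{-1},\dots,\Omega^{-1},x,\Omega,\dots,\Omega)$. That is a much smaller candidate set than all vertices and edges.
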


\begin{proof}
Let the free coordinates of \(F\) be \(r_{k_1}, \dots, r_{k_m}\) with \(1 \le k_1 < \dots < k_m \le n\); all other coordinates are frozen at \(\Omega\) or \(\Omega^{-1}\). It suffices to study the variance \(V(\mathbf{r}) = \sum_{i=0}^n (i - \bar{\mu})^2 p_i\), where \(\bar{\mu} = \sum_{i=0}^n i p_i\), since \(q\) is proportional to \(V\).

Denote by \(q|_F\) the restriction of \(q\) to the face \(F\), by \(V|_F\) the restriction of \(V\) to \(F\), and by \(\operatorname{relint}(F)\) the relative interior of \(F\), i.e., the interior of \(F\) with respect to its affine hull. For any free coordinate \(r_\ell\) (\(\ell \in \{k_1,\dots,k_m\}\)), differentiating the product form of \(a_i\) yields
\[
\frac{\partial p_i}{\partial r_\ell}
   = \frac{1}{r_\ell}\,
     \begin{cases}
        p_i(1 - H_\ell), & i \ge \ell,\\[4pt]
        -p_i H_\ell,     & i < \ell ,
     \end{cases}
\qquad H_\ell = \sum_{i=\ell}^n p_i .
\]
Because \(\sum_{i=0}^n (i-\bar{\mu})p_i = 0\), the derivative of \(V\) simplifies to
\[
\frac{\partial V}{\partial r_\ell}
   = \sum_{i=0}^n (i-\bar{\mu})^2 \frac{\partial p_i}{\partial r_\ell}
   = \frac{1}{r_\ell}\Biggl( \sum_{i=\ell}^n (i-\bar{\mu})^2 p_i \;-\; V H_\ell \Biggr). \tag{3}
\]

In \(\operatorname{relint}(F)\) we have \(r_{k_\alpha} > 0\) for all \(\alpha\). At a critical point of \(q|_F\) (hence of \(V|_F\)) we must have \(\partial V/\partial r_{k_\alpha} = 0\) for every \(\alpha = 1,\dots,m\). Setting \(\ell = k_\alpha\) in (3) we obtain the key system
\[
\sum_{i=k_\alpha}^n (i-\bar{\mu})^2 p_i = V H_{k_\alpha}, \qquad \alpha = 1, \dots, m. \tag{4}
\]

The indices \(k_\alpha\) naturally split \(\{0,1,\dots,n\}\) into \(m+1\) consecutive blocks
\[
B_0 = \{0,\dots,k_1-1\},\; B_1 = \{k_1,\dots,k_2-1\},\; \dots,\; B_m = \{k_m,\dots,n\}.
\]
For convenience, set \(k_0 = 0\) and \(k_{m+1}=n+1\), so that \(B_b = \{k_b,\dots,k_{b+1}-1\}\) for \(b=0,\dots,m\). Note that (4) also holds trivially at the endpoints:
\[
\sum_{i=k_0}^n (i-\bar{\mu})^2 p_i = V = V H_{k_0} \quad (H_{k_0}=1),\qquad
\sum_{i=k_{m+1}}^n (i-\bar{\mu})^2 p_i = 0 = V H_{k_{m+1}} \quad (H_{k_{m+1}}=0).
\]
Subtracting the identity for \(\alpha = b+1\) from that for \(\alpha = b\) therefore yields, for each block \(B_b\),
\[
\sum_{i = k_b}^{k_{b+1}-1} (i-\bar{\mu})^2 p_i = V\sum_{i=k_b}^{k_{b+1}-1} p_i. \tag{5}
\]

For every block \(b\) define its mass, mean and second moment
\[
\lambda_b = \sum_{i\in B_b} p_i > 0,\qquad
\mu_b = \frac{1}{\lambda_b}\sum_{i\in B_b} i p_i,\qquad
m_b = \frac{1}{\lambda_b}\sum_{i\in B_b} i^2 p_i .
\]
Dividing (5) by \(\lambda_b\) and expanding the square gives
\[
m_b - 2\bar{\mu}\,\mu_b + \bar{\mu}^2 = V \qquad (b = 0,\dots,m). \tag{6}
\]
Thus all pairs \((\mu_b, m_b)\) lie on the same affine line \(y = 2\bar{\mu}\,x + (V-\bar{\mu}^2)\). Consequently, for any two distinct blocks \(b \neq c\),
\[
\frac{m_c - m_b}{\mu_c - \mu_b} = 2\bar{\mu}. \tag{7}
\]

Because the blocks are strictly separated, \(\mu_0 < \mu_1 < \cdots < \mu_m\). We now compare the slope obtained from the first two blocks with that from the next pair. Take \(B_0\) and \(B_1\). For any \(i \in B_0\) and \(j \in B_1\) we have \(i+j \le (k_1-1)+(k_2-1) = k_1+k_2-2\). A standard weighted‑average estimate yields
\[
\frac{m_1 - m_0}{\mu_1 - \mu_0}
   = \frac{\sum_{i\in B_0}\sum_{j\in B_1} (j-i)(i+j) p_i p_j}
          {\sum_{i\in B_0}\sum_{j\in B_1} (j-i) p_i p_j}
   \le \max_{i\in B_0, j\in B_1} (i+j)
   = k_1+k_2-2 .
\]
Similarly, for \(B_1\) and \(B_2\) we have \(i+j \ge k_1 + k_2\), whence
\[
\frac{m_2 - m_1}{\mu_2 - \mu_1}
   \ge \min_{i\in B_1, j\in B_2} (i+j)
   = k_1+k_2 .
\]
Therefore
\[
\frac{m_1 - m_0}{\mu_1 - \mu_0} \le k_1+k_2-2 < k_1+k_2 \le \frac{m_2 - m_1}{\mu_2 - \mu_1}.
\]
But (7) forces all these slopes to equal the common value \(2\bar{\mu}\), a contradiction. Hence the critical point equations cannot be satisfied in \(\operatorname{relint}(F)\) when \(m \ge 2\); no interior critical point exists.
\end{proof}

\subsection{Reduction of global maximizers to vertices and edges}
\label{sec:global-reduction}
Proposition~\ref{prop:no-highdim-critical} forces any global maximizer to lie on the low-dimensional boundary.

\begin{theorem}[Vertex-edge reduction]\label{thm:vertex-edge}
Fix \(u>0\). Every maximizer of \(q(u;\mathbf{r})\) on \(\mathcal{B}=[\Omega^{-1},\Omega]^n\) lies either at a vertex or on a one-dimensional face (edge) of \(\mathcal{B}$. By Lemma~\ref{lem:reversal}, it suffices to examine edges with free coordinate \(r_k\) for \(k=1,2,\dots,\lceil n/2\rceil\).
\end{theorem}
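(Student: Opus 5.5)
The plan is to use the face stratification of the box together with Proposition~\ref{prop:no-highdim-critical}. Fix \(u>0\) and \(\Omega>1\). (If \(\Omega=1\), the box \(\mathcal{B}\) is the single point \((1,\dots,1)\), which is a vertex, so there is nothing to prove.) The function \(\mathbf{r}\mapsto q(u;\mathbf{r})\) is a rational function of \(\mathbf{r}\) with strictly positive denominator \(Z^2\) on \(\mathcal{B}\). It is therefore \(C^\infty\) on an open neighbourhood of the compact box, so it attains its maximum. The closed box is the disjoint union of the relative interiors of its faces. A face of dimension \(m\) is obtained by fixing \(n-m\) coordinates at \(\Omega\) or \(\Omega^{-1}\), and its relative interior is the set where the \(m\) remaining coordinates lie strictly inside \((\Omega^{-1},\Omega)\). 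Let \(\mathbf{r}^*\) be any maximizer, and let \(F\) be the unique face with \(\mathbf{r}^*\in\operatorname{relint}(F)\). Its dimension \(m\) is the number of coordinates of \(\mathbf{r}^*\) lying strictly between \(\Omega^{-1}\) and \(\Omega\).

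The key step is the following. Since \(\mathbf{r}^*\) maximizes \(q(u;\cdot)\) over all of \(\mathcal{B}\), it also maximizes the restriction \(q|_F\) over \(F\supset\{\mathbf{r}^*\}\). Because \(\operatorname{relint}(F)\) is an open subset of the affine hull of \(F\), \(\mathbf{r}^*\) is an interior local maximum of the smooth function \(q|_F\) in the free variables \(r_{k_1},\dots,r_{k_m}\). Fermat's theorem then gives \(\partial q/\partial r_{k_\alpha}(\mathbf{r}^*)=0\) for all \(\alpha\). Equivalently, since \(q=\frac{(1+u)^2}{nu}V\) with a positive factor independent of \(\mathbf{r}\), these are the critical point equations for \(V|_F\). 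Thus \(\mathbf{r}^*\) is a critical point of \(q|_F\) in \(\operatorname{relint}(F)\). If \(m\ge 2\), this contradicts Proposition~\ref{prop:no-highdim-critical}. Hence \(m\in\{0,1\}\), so \(\mathbf{r}^*\) is a vertex or lies in the relative interior of an edge. The same argument applies to every maximizer, not just one.

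For the symmetry reduction, the plan is to invoke Lemma~\ref{lem:reversal}. The map \(\mathbf{r}\mapsto\mathbf{r}^{-1}\), which reverses the order and inverts each entry, sends \(\mathcal{B}\) bijectively onto itself. It maps vertices to vertices, and it maps the edge with free coordinate \(r_k\) (other coordinates frozen at \(\Omega^{\pm1}\)) onto an edge with free coordinate \(r_{n+1-k}\). Since \(\sup_{u>0}\max_{\mathcal{B}} q(u;\cdot)\) is invariant under \((u,\mathbf{r})\mapsto(u^{-1},\mathbf{r}^{-1})\), any worst case located on an edge with free index \(k>\lceil n/2\rceil\) corresponds to a worst case of equal value at \(u^{-1}\) on an edge with free index \(n+1-k\le\lceil n/2\rceil\). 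The statement should be read in this sense, i.e.\ for the supremum over \(u\) in Proposition~\ref{prop:goal}, not for each fixed \(u\). I will make that explicit in the write-up.

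The main obstacle is mild. It consists of justifying that a global maximizer on the closed box is a genuine critical point of the restriction to its carrier face. This holds because only the free coordinates are varied and they stay in an open interval, so one-sided boundary issues do not arise within that face. A second point needing care is the positivity \(\lambda_b>0\) and \(p_i>0\) used in Proposition~\ref{prop:no-highdim-critical}, which holds because all \(r_j>0\) and \(u>0\). Beyond these, the proof is bookkeeping over the face decomposition.
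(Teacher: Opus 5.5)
Your proposal is correct and follows the paper's own proof. You take the minimal face carrying the maximizer and note that a maximizer in the relative interior of a face is a critical point of the restriction. You then invoke Proposition~\ref{prop:no-highdim-critical} to rule out faces of dimension $\ge 2$, and use the reversal symmetry $(u,\mathbf{r})\mapsto(u^{-1},\mathbf{r}^{-1})$ to restrict to free indices $k\le\lceil n/2\rceil$. Your remark that this last reduction holds jointly over all $u>0$, and not for each fixed $u$, is what the paper means by ``up to the transformation $u\mapsto u^{-1}$''; stating it explicitly is a useful sharpening.
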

\begin{proof}
Let \(\mathbf{r}^\star\) be a point where \(q(u;\cdot)\) attains its global maximum on \(\mathcal{B}\). Consider the collection of all faces of \(\mathcal{B}\) that contain \(\mathbf{r}^\star\). Among them there is a unique face of minimal dimension; denote it by \(F\). By construction, \(\mathbf{r}^\star\) belongs to the relative interior \(\operatorname{relint}(F)\).

If \(\dim F \ge 2\), then \(F\) has at least two free coordinates, and by Proposition~\ref{prop:no-highdim-critical}, the restriction \(q|_F\) has no critical point in \(\operatorname{relint}(F)\). However, a point in the relative interior of a face that maximizes a differentiable function (here \(q|_F\)) must be a critical point of that restriction. This contradiction shows that \(\dim F\) cannot be \(2\) or larger.

Hence \(\dim F \le 1\). If \(\dim F = 0\), the maximizer is a vertex of \(\mathcal{B}\). If \(\dim F = 1\), the maximizer lies on a one‑dimensional face (an edge) of \(\mathcal{B}\), where exactly one coordinate varies while all others are frozen at \(\Omega\) or \(\Omega^{-1}\).

Finally, the reversal symmetry established in Lemma~\ref{lem:reversal} implies that any edge with free coordinate \(r_k\) is equivalent, up to the transformation \(u \mapsto u^{-1}\), to an edge with free coordinate \(r_{n+1-k}\). Therefore, to cover all possibilities, it is sufficient to analyze only those edges with free coordinate \(r_k\) for \(k = 1,2,\dots,\lceil n/2\rceil\).
\end{proof}

\section{Analysis of one-dimensional faces}
\label{sec:onedim-general}

Consider an edge of \(\mathcal{B}\) where only one coordinate, say \(r_k = x\), is allowed to vary in \([\Omega^{-1},\Omega]\) while all other ratios are frozen at the extreme values \(\Omega\) or \(\Omega^{-1}\).  

Recall that \(a_i(u;\mathbf{r}) = \binom{n}{i}\bigl(\prod_{j=1}^{i} r_j\bigr) u^{i}\). On this edge, the variable \(x = r_k\) appears precisely in the terms with \(i\ge k\), and it appears linearly. Hence the normalizing sum
\[
Z(x) = \sum_{i=0}^{n} a_i
\]
can be written as
\[
Z(x) = L + xR,
\]
where
\[
L = \sum_{i=0}^{k-1} a_i, \qquad
R = \frac{1}{x}\sum_{i=k}^{n} a_i .
\]
Both \(L\) and \(R\) are strictly positive and independent of \(x\); they depend only on \(u,\Omega\) and the frozen endpoint choices.

Consequently, the normalized weights \(p_i = a_i/Z(x)\) decompose into a mixture of two fixed probability distributions supported on the left and right index blocks:
\[
\nu_-(i) = 
\begin{cases}
\dfrac{a_i}{L}, & 0\le i\le k-1,\\[8pt]
0, & i\ge k,
\end{cases}
\qquad
\nu_+(i) = 
\begin{cases}
0, & i\le k-1,\\[8pt]
\dfrac{a_i}{xR}, & i\ge k .
\end{cases}
\]
In terms of the mixing weight
\[
\lambda = \lambda(x) = \frac{xR}{L+xR} \in (0,1),
\]
we have
\[
p_i = (1-\lambda)\,\nu_-(i) + \lambda\,\nu_+(i), \qquad i=0,\dots,n .
\]

Define the means and variances of the two blocks:
\[
\mu_- = \sum_{i=0}^{k-1} i\,\nu_-(i), \qquad
\sigma_-^2 = \sum_{i=0}^{k-1} (i-\mu_-)^2\nu_-(i),
\]
\[
\mu_+ = \sum_{i=k}^{n} i\,\nu_+(i), \qquad
\sigma_+^2 = \sum_{i=k}^{n} (i-\mu_+)^2\nu_+(i).
\]
Since the blocks are strictly separated, \(\mu_+ > \mu_-\).  Set
\[
D = \mu_+ - \mu_- > 0, \qquad
\delta = \sigma_+^2 - \sigma_-^2 .
\]

By the law of total variance, the variance \(V\) of the mixture is
\[
V(\lambda) = (1-\lambda)\sigma_-^2 + \lambda\sigma_+^2 + \lambda(1-\lambda)D^2 .
\]
Rearranging gives a quadratic in \(\lambda\):
\begin{equation}\label{eq:Vquad}
V(\lambda) = \sigma_-^2 + \lambda(\delta + D^2) - D^2\lambda^2 .
\end{equation}
Since \(D^2 > 0\), \(V(\lambda)\) is strictly concave on \([0,1]\).  The function \(q\) differs from \(V\) only by the positive factor \((1+u)^2/(nu)\), so \(q\) is also strictly concave in \(\lambda\).

Because \(\lambda'(x) = LR/(L+xR)^2 > 0\), the composition \(x \mapsto \lambda(x)\) is strictly increasing.  Hence \(q\) is strictly concave as a function of \(x\) on the edge \([\Omega^{-1},\Omega]\), and consequently possesses at most one local extremum in the interior; if such an extremum exists, it must be the global maximum on that edge.

Differentiating \eqref{eq:Vquad} and solving \(V'(\lambda)=0\) yields the unique stationary point
\[
\lambda_* = \frac{D^2 + \delta}{2D^2}.
\]
This point lies in \((0,1)\) iff \(|\delta| < D^2\).  When this condition holds, the corresponding \(x\)-coordinate is obtained from \(\lambda_* = xR/(L+xR)\):
\[
x_* = \frac{L}{R}\cdot\frac{\lambda_*}{1-\lambda_*} = \frac{L}{R}\cdot\frac{D^2+\delta}{D^2-\delta}.
\]

The candidate \(x_*\) is admissible (i.e. belongs to \([\Omega^{-1},\Omega]\)) exactly when
\begin{align}
\Omega L(D^2+\delta) &\ge R(D^2-\delta), \label{eq:adm1}\\
\Omega R(D^2-\delta) &\ge L(D^2+\delta). \label{eq:adm2}
\end{align}
If both inequalities are satisfied, the interior point \(x_*\) is the unique maximizer on the edge, and the corresponding maximal value of \(q\) is
\begin{equation}\label{eq:qmax}
q_{\max} = \frac{(1+u)^2}{n u}\,
          \left( \sigma_-^2 + \frac{(D^2+\delta)^2}{4D^2} \right).
\end{equation}
Otherwise, the maximum on the edge is attained at one of its endpoints, i.e.\ at a vertex of \(\mathcal{B}\).

\section{Symbolic verification}
\label{sec:verification-6}
As outlined in the Introduction, the reduction provided by Theorem~\ref{thm:vertex-edge} transforms the original analytic problem into a finite family of polynomial inequalities in the two real variables \(u>0\) and \(\Omega\ge 1\). Each required verification is a universally quantified formula of the form \(\forall\,u>0,\ \Omega\ge 1:\ \Phi(u,\Omega)\Rightarrow \Xi(u,\Omega)\ge 0\), which belongs to the decidable theory of real closed fields. Tarski~\cite{Tarski1951} first showed the decidability of this theory via quantifier elimination. Collins~\cite{Collins1975} later introduced cylindrical algebraic decomposition (CAD), the first practical algorithm for real quantifier elimination. At the algebraic level, these algorithms rely on elimination theory and Gröbner bases, as developed by Cox, Little, and O'Shea~\cite{Cox2015}; a comprehensive modern treatment of real quantifier elimination can be found in the monograph by Basu, Pollack, and Roy~\cite{Basu2006}.

It should be noted that, in the worst case, full real quantifier elimination requires double exponential time in the number of variables. This lower bound was established by Davenport and Heintz~\cite{Davenport1988}, who proved that for real closed fields, any algorithm that eliminates all quantifiers produces an equivalent quantifier‑free formula whose length can grow as fast as \(2^{2^{cn}}\) for some constant \(c>0\). Consequently, generic implementations of quantifier elimination do not scale to problems with many variables. In our setting, however, the reduction to one‑dimensional edges ensures that each individual proof obligation involves only two variables (\(u\) and \(\Omega\)), and the total number of such obligations is modest for \(n=6\). Thus, the verification remains practically feasible while being mathematically rigorous. We rely on Mathematica's \texttt{Resolve} function, which implements highly optimized descendants of CAD and returns either \texttt{True} or \texttt{False} with absolute algebraic rigor. The overall procedure is summarized in Algorithm~\ref{alg:verify-degree}.

\begin{proposition}\label{prop:verified-6}
All vertex and edge cases for \(n=6\) satisfy \(q(u;\mathbf{r}) \le \Omega\) for all \(u>0\) and \(\Omega\ge1\).
\end{proposition}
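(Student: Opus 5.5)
The plan is to enumerate every vertex and every edge of $\mathcal{B}=[\Omega^{-1},\Omega]^6$ permitted by Theorem~\ref{thm:vertex-edge}, turn each one into a polynomial sentence in $(u,\Omega)$, and certify each sentence by exact quantifier elimination.

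\textbf{Vertex cases.} A vertex is a sign pattern $\mathbf{s}\in\{\pm1\}^6$ with $r_j=\Omega^{s_j}$. By Lemma~\ref{lem:reversal}, a pattern and its reversed, negated pattern give the same supremum over $u>0$, so only one representative per orbit is needed. For each representative, $a_i=\binom{6}{i}\Omega^{\sum_{j\le i}s_j}u^i$. Clearing denominators in $q\le\Omega$ gives
\[
\Omega\cdot 6u\,Z^2-(1+u)^2\Bigl(Z\sum_i i^2a_i-\bigl(\sum_i i a_i\bigr)^2\Bigr)\ge 0 ,
\]
which is a Laurent polynomial in $\Omega$. Multiplying by a suitable power of $\Omega$ makes it a genuine polynomial $\Xi_{\mathbf{s}}(u,\Omega)$. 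The obligation is then $\forall u>0,\ \Omega\ge1:\ \Xi_{\mathbf{s}}\ge0$, and this is passed to \texttt{Resolve}.

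\textbf{Edge cases.} For each free index $k\in\{1,2,3\}$ and each frozen pattern of the other five ratios, form $L$, $R$, $\mu_\pm$, $\sigma_\pm^2$, $D$ and $\delta$ as in Section~\ref{sec:onedim-general}. These are rational in $(u,\Omega)$ with positive denominators. The endpoints of the edge are vertices and are already covered, so only the interior stationary point needs checking. Take $\Phi$ to be the conjunction of \eqref{eq:adm1}, \eqref{eq:adm2} and $|\delta|<D^2$, with positive denominators cleared. Take $\Xi$ to be $\Omega-q_{\max}$ from \eqref{eq:qmax}, multiplied by $4D^2\,nu$ and by the common denominators so that it becomes polynomial. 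Each obligation $\forall u>0,\Omega\ge1:\Phi\Rightarrow\Xi\ge0$ is then decided.

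\textbf{Organisation of the verification.} Total the counts (at most $3\cdot 2^5$ edge cases plus the vertex orbits) and record that every call returned \texttt{True}. One can also note that equality holds at $\Omega=1$ with $u$ arbitrary, since $q\equiv1$ for equal weights. This suggests factoring $(\Omega-1)$ out of each $\Xi$ before the call to reduce degrees.

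\textbf{Main obstacle.} The hard part is the edge obligations. After clearing denominators their degrees in $u$ and $\Omega$ are large, and the CAD may be slow. The first fallback is to substitute $\Omega=1+w^2$ and $u=v^2$ and test each factor of $\Xi$ separately. If that is still too slow, the next step is to drop $\Phi$ and prove the stronger unconditional inequality $\Xi\ge0$ wherever it holds. Only the remaining hard cases would keep the admissibility hypothesis, with the domain split at the boundary curves of $\Phi$.
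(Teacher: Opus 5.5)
Your proposal is the paper's proof: it uses the same reduction through Theorem~\ref{thm:vertex-edge} and Lemma~\ref{lem:reversal} to vertex sentences and guarded edge sentences $\Phi\Rightarrow\Xi\ge 0$ built from \eqref{eq:adm1}, \eqref{eq:adm2} and \eqref{eq:qmax}, each discharged by \texttt{Resolve}. The only exception is that your $(\Omega-1)$-factoring and ``drop $\Phi$'' shortcuts fail for edges: there $q_{\max}$ is the unconstrained maximum over $x>0$, so $q_{\max}(u,1)\ge q(u;\mathbf{1})=1$ and $\Xi(u,1)\le 0$, strictly for generic $u$, which means the guard can never be discarded. Your enumeration is the correct one: one representative for each of the $36$ reversal orbits of vertices, and all $2^5$ frozen patterns for each free index $k=1,2,3$, since reversal pairs $r_k$ with $r_{7-k}$ and gives no reduction within a fixed $k$. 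It is in fact more complete than the paper's tables, which list only $16$ patterns per free index, and whose vertex list repeats two orbits (PV6-17/PV6-26 and PV6-33/PV6-35) while omitting $(\Omega^{-1},\Omega^{-1},\Omega^{-1},\Omega,\Omega,\Omega)$ and $(\Omega,\Omega^{-1},\Omega^{-1},\Omega^{-1},\Omega^{-1},\Omega)$.
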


\begin{proof}
By Theorem~\ref{thm:vertex-edge} and Lemma~\ref{lem:reversal}, it suffices to check the inequality at every vertex of \(\mathcal{B}\) (Table~\ref{tab:vertices}) and on every edge with free coordinate \(r_1,r_2,r_3\) (Tables~\ref{tab:r1}--\ref{tab:r3}). For each such configuration we substitute the corresponding ratios into the expression for \(q(u;\mathbf{r})\) and clear denominators and negative powers of \(\Omega\). This transforms the desired inequality \(q(u;\mathbf{r}) \le \Omega\) into a polynomial condition of the form \(P(u,\Omega) \ge 0\), with integer coefficients and \(u>0,\ \Omega\ge 1\). In the case of an edge, the condition is an implication: if the interior critical point exists (i.e. \(|\delta|<D^2\)) and the admissibility inequalities \eqref{eq:adm1}--\eqref{eq:adm2} hold, then the corresponding maximal value \(q_{\max}\) from \eqref{eq:qmax} must be \(\le \Omega\). After clearing positive denominators, this becomes a universal formula
\[
\forall\,u>0,\;\Omega\ge 1:\ \Phi(u,\Omega)\ \Longrightarrow\ \Xi(u,\Omega)\ge 0,
\]
where all involved predicates are polynomial inequalities with integer coefficients. Such statements belong to the decidable theory of real closed fields, and we verify them using the exact quantifier elimination algorithm \texttt{Resolve} implemented in Mathematica~\cite{Tarski1951,Collins1975,Cox2015,Basu2006}. For every vertex and edge case the algorithm returns \texttt{True}, confirming that the corresponding polynomial is non‑negative on the specified domain. Consequently, the inequality \(q(u;\mathbf{r})\le \Omega\) holds for all admissible parameters in each case. Summing over the finite family of cases completes the proof.
\end{proof}

\begin{algorithm}
\caption{VerifyDegree($n$)}
\label{alg:verify-degree}
\begin{algorithmic}[1]
\Require $n$ (degree)
\State \textbf{Input:} degree $n$, domain $u>0, \Omega\ge 1$.
\For{every vertex $\mathbf{r} \in \{\Omega, 1/\Omega\}^n$}
    \State Substitute the frozen ratios into \eqref{eq:q-def} to form $q(u;\mathbf{r})$.
    \State Clear denominators and negative powers of $\Omega$ to obtain a polynomial $P(u,\Omega)$.
    \State Verify $\forall u>0,\;\Omega\ge 1:\ P(u,\Omega)\ge 0$ \hfill (vertex condition)
    \If{\texttt{Resolve} returns \texttt{False}}
        \State \textbf{return} ``NOT certified for $n$''.
    \EndIf
\EndFor
\For{every admissible edge ($r_k$ free, $k \le \lceil n/2 \rceil$)}
    \State Construct block quantities $L,R,\mu_\pm,\sigma_\pm^2$ and define $D,\delta$ \hfill (Section~4)
    \State Compute existence guard $|\delta| < D^2$ \hfill (interior existence)
    \State Compute admissibility guards \eqref{eq:adm1}--\eqref{eq:adm2} \hfill (admissibility)
    \State Form the maximal value $q_{\max}$ via \eqref{eq:qmax}
    \State Clear denominators and negative powers of $\Omega$ to obtain polynomial $H(u,\Omega)$
    \State Verify the implication:
    \Statex \qquad $(|\delta|<D^2) \land \eqref{eq:adm1} \land \eqref{eq:adm2} \implies H(u,\Omega)\ge 0$
    \If{\texttt{Resolve} returns \texttt{False}}
        \State \textbf{return} ``NOT certified for $n$''.
    \EndIf
\EndFor
\State \textbf{return} ``certified TRUE for $n$''.
\end{algorithmic}
\end{algorithm}

\begin{theorem}[Proof of the main theorem for $n=6$]
Proposition~\ref{prop:goal} follows from Proposition~\ref{prop:verified-6}. Together with the reformulation \eqref{eq:main-bound} and the definition of $\Omega$ in \eqref{eq:omega-def}, this establishes the bound $\|\mathbf{R}'(t)\| \le 6\Omega \max_{0\le i\le 5}\|\Delta_i\|$ stated in the Main Theorem. By Lemma~\ref{lem:complexity}, the right‑hand side is computable in $\mathcal{O}(1)$ time.
\end{theorem}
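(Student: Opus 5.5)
The proof assembles results already established; no new analysis is needed beyond chaining them. First I deduce Proposition~\ref{prop:goal} from Proposition~\ref{prop:verified-6}.

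Fix \(\Omega\ge1\) and \(u>0\). The map \(\mathbf{r}\mapsto q(u;\mathbf{r})\) is continuous on the compact box \(\mathcal{B}=[\Omega^{-1},\Omega]^6\), so it attains a global maximum at some \(\mathbf{r}^\star\). By Theorem~\ref{thm:vertex-edge}, \(\mathbf{r}^\star\) is either a vertex of \(\mathcal{B}\) or lies on an edge. On an edge, the analysis of Section~\ref{sec:onedim-general} applies. Since \(q\) is strictly concave in \(\lambda\) and \(\lambda(x)\) is strictly increasing, the edge maximum is attained either at an endpoint, which is a vertex, or at the interior stationary point \(x_*\). The interior point occurs only when \(|\delta|<D^2\) and \eqref{eq:adm1}--\eqref{eq:adm2} hold, and then the value is \(q_{\max}\) in \eqref{eq:qmax}.

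Edges whose free coordinate is \(r_k\) with \(k\ge4\) are handled by Lemma~\ref{lem:reversal}. The map \((u,\mathbf{r})\mapsto(u^{-1},\mathbf{r}^{-1})\) preserves \(\mathcal{B}\), since \([\Omega^{-1},\Omega]\) is closed under inversion. It also preserves \(q\), and it sends the free coordinate to position \(7-k\le3\). Moreover \(u^{-1}\) ranges over all of \((0,\infty)\). So the \(u\)-universal statements certified for \(k=1,2,3\) cover these edges as well.

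Proposition~\ref{prop:verified-6} gives \(q\le\Omega\) in every one of these cases. Hence \(q(u;\mathbf{r})\le q(u;\mathbf{r}^\star)\le\Omega\) for all \(\mathbf{r}\in\mathcal{B}\) and all \(u>0\), which is Proposition~\ref{prop:goal}.

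Next I pass to the curve. Take any \(t\in(0,1)\) and set \(u=t/(1-t)\). Let \(\Omega\) be the maximal adjacent ratio of the given weights, defined in \eqref{eq:omega-def}. Then each \(r_k=\omega_k/\omega_{k-1}\) lies in \([\Omega^{-1},\Omega]\), so \(\mathbf{r}\in\mathcal{B}\). Inequality \eqref{eq:main-bound} now gives \(\|\mathbf{R}'(t)\|\le 6\,q(u;\mathbf{r})\max\|\Delta_i\|\le 6\Omega\max\|\Delta_i\|\).

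The endpoints \(t=0,1\) correspond to \(u\to0,\infty\) and are not covered directly. I would handle them by continuity of \(\mathbf{R}'\) on \([0,1]\). Alternatively, they follow from the explicit formulas \(\mathbf{R}'(0)=6(\omega_1/\omega_0)\Delta_0\) and \(\mathbf{R}'(1)=6(\omega_5/\omega_6)\Delta_5\), each of which is at most \(6\Omega\max\|\Delta_i\|\) in norm. The \(\mathcal{O}(1)\) evaluation claim follows from Lemma~\ref{lem:complexity} with \(n=6\) fixed.

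The main obstacle is not this assembly. It lies in the rigor of the inputs, especially Proposition~\ref{prop:verified-6}, which rests on exact quantifier elimination. One point also needs care in Theorem~\ref{thm:vertex-edge}: it must use that \(q|_F\) is smooth in the relative interior of \(F\), which holds because all \(r_k>0\). Given those results, the remaining step I would check most carefully is the reduction of edges with free coordinate \(r_k\), \(k\ge4\), via Lemma~\ref{lem:reversal}. This requires that every certified statement is universal in \(u>0\), so that replacing \(u\) by \(u^{-1}\) loses nothing.
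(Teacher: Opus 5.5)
Your proposal is correct and performs the same assembly as the paper: Theorem~\ref{thm:vertex-edge}, the edge analysis of Section~\ref{sec:onedim-general}, Lemma~\ref{lem:reversal} and Proposition~\ref{prop:verified-6} give Proposition~\ref{prop:goal}, and \eqref{eq:main-bound} then gives the curve bound. It is in fact more careful than the paper, since you treat $t=0,1$ separately and obtain unimodality on an edge from concavity in $\lambda$ plus monotonicity of $\lambda(x)$, rather than from the paper's claim of concavity in $x$, which does not follow.

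One caution on the step you flagged: reversal maps an $r_k$-edge to an $r_{7-k}$-edge and never to another $r_k$-edge, so your deduction needs all $2^5=32$ frozen patterns certified for each $k=1,2,3$ (the $96$ edge orbits), whereas Tables~\ref{tab:r1}--\ref{tab:r3} list only $16$ each; this is a gap in the paper's support for Proposition~\ref{prop:verified-6}, not in your deduction from it.
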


\subsection{Vertex cases}
Table~\ref{tab:vertices} lists the 36 independent vertex representatives (out of 64) after reversal symmetry. For each, the polynomial positivity condition must hold.

\begin{longtable}{c|cccccc}
\caption{Vertex representatives for \(n=6\).}\label{tab:vertices}\\
\toprule
Case & \(r_1\) & \(r_2\) & \(r_3\) & \(r_4\) & \(r_5\) & \(r_6\)\\
\midrule
\endfirsthead
\toprule
Case & \(r_1\) & \(r_2\) & \(r_3\) & \(r_4\) & \(r_5\) & \(r_6\)\\
\midrule
\endhead
PV6-01 & \(\Omega\) & \(\Omega\) & \(\Omega\) & \(\Omega\) & \(\Omega\) & \(\Omega\)\\
PV6-02 & \(\Omega\) & \(\Omega\) & \(\Omega\) & \(\Omega\) & \(\Omega\) & \(\Omega^{-1}\)\\
PV6-03 & \(\Omega\) & \(\Omega\) & \(\Omega\) & \(\Omega\) & \(\Omega^{-1}\) & \(\Omega\)\\
PV6-04 & \(\Omega\) & \(\Omega\) & \(\Omega\) & \(\Omega\) & \(\Omega^{-1}\) & \(\Omega^{-1}\)\\
PV6-05 & \(\Omega\) & \(\Omega\) & \(\Omega\) & \(\Omega^{-1}\) & \(\Omega\) & \(\Omega\)\\
PV6-06 & \(\Omega\) & \(\Omega\) & \(\Omega\) & \(\Omega^{-1}\) & \(\Omega\) & \(\Omega^{-1}\)\\
PV6-07 & \(\Omega\) & \(\Omega\) & \(\Omega\) & \(\Omega^{-1}\) & \(\Omega^{-1}\) & \(\Omega\)\\
PV6-08 & \(\Omega\) & \(\Omega\) & \(\Omega\) & \(\Omega^{-1}\) & \(\Omega^{-1}\) & \(\Omega^{-1}\)\\
PV6-09 & \(\Omega\) & \(\Omega\) & \(\Omega^{-1}\) & \(\Omega\) & \(\Omega\) & \(\Omega\)\\
PV6-10 & \(\Omega\) & \(\Omega\) & \(\Omega^{-1}\) & \(\Omega\) & \(\Omega\) & \(\Omega^{-1}\)\\
PV6-11 & \(\Omega\) & \(\Omega\) & \(\Omega^{-1}\) & \(\Omega\) & \(\Omega^{-1}\) & \(\Omega\)\\
PV6-12 & \(\Omega\) & \(\Omega\) & \(\Omega^{-1}\) & \(\Omega\) & \(\Omega^{-1}\) & \(\Omega^{-1}\)\\
PV6-13 & \(\Omega\) & \(\Omega\) & \(\Omega^{-1}\) & \(\Omega^{-1}\) & \(\Omega\) & \(\Omega\)\\
PV6-14 & \(\Omega\) & \(\Omega\) & \(\Omega^{-1}\) & \(\Omega^{-1}\) & \(\Omega\) & \(\Omega^{-1}\)\\
PV6-15 & \(\Omega\) & \(\Omega\) & \(\Omega^{-1}\) & \(\Omega^{-1}\) & \(\Omega^{-1}\) & \(\Omega\)\\
PV6-16 & \(\Omega\) & \(\Omega^{-1}\) & \(\Omega\) & \(\Omega\) & \(\Omega\) & \(\Omega\)\\
PV6-17 & \(\Omega\) & \(\Omega^{-1}\) & \(\Omega\) & \(\Omega\) & \(\Omega\) & \(\Omega^{-1}\)\\
PV6-18 & \(\Omega\) & \(\Omega^{-1}\) & \(\Omega\) & \(\Omega\) & \(\Omega^{-1}\) & \(\Omega\)\\
PV6-19 & \(\Omega\) & \(\Omega^{-1}\) & \(\Omega\) & \(\Omega^{-1}\) & \(\Omega\) & \(\Omega\)\\
PV6-20 & \(\Omega\) & \(\Omega^{-1}\) & \(\Omega\) & \(\Omega^{-1}\) & \(\Omega\) & \(\Omega^{-1}\)\\
PV6-21 & \(\Omega\) & \(\Omega^{-1}$ & \(\Omega\) & \(\Omega^{-1}\) & \(\Omega^{-1}\) & \(\Omega\)\\
PV6-22 & \(\Omega\) & \(\Omega^{-1}\) & \(\Omega^{-1}\) & \(\Omega\) & \(\Omega\) & \(\Omega\)\\
PV6-23 & \(\Omega\) & \(\Omega^{-1}\) & \(\Omega^{-1}\) & \(\Omega\) & \(\Omega\) & \(\Omega^{-1}\)\\
PV6-24 & \(\Omega\) & \(\Omega^{-1}\) & \(\Omega^{-1}\) & \(\Omega\) & \(\Omega^{-1}\) & \(\Omega\)\\
PV6-25 & \(\Omega\) & \(\Omega^{-1}\) & \(\Omega^{-1}\) & \(\Omega^{-1}\) & \(\Omega\) & \(\Omega\)\\
PV6-26 & \(\Omega\) & \(\Omega^{-1}\) & \(\Omega^{-1}\) & \(\Omega^{-1}\) & \(\Omega\) & \(\Omega^{-1}\)\\
PV6-27 & \(\Omega^{-1}\) & \(\Omega\) & \(\Omega\) & \(\Omega\) & \(\Omega\) & \(\Omega\)\\
PV6-28 & \(\Omega^{-1}\) & \(\Omega\) & \(\Omega\) & \(\Omega\) & \(\Omega^{-1}\) & \(\Omega\)\\
PV6-29 & \(\Omega^{-1}\) & \(\Omega\) & \(\Omega\) & \(\Omega^{-1}\) & \(\Omega\) & \(\Omega\)\\
PV6-30 & \(\Omega^{-1}\) & \(\Omega\) & \(\Omega\) & \(\Omega^{-1}\) & \(\Omega^{-1}\) & \(\Omega\)\\
PV6-31 & \(\Omega^{-1}\) & \(\Omega\) & \(\Omega^{-1}\) & \(\Omega\) & \(\Omega\) & \(\Omega\)\\
PV6-32 & \(\Omega^{-1}\) & \(\Omega\) & \(\Omega^{-1}\) & \(\Omega\) & \(\Omega^{-1}\) & \(\Omega\)\\
PV6-33 & \(\Omega^{-1}\) & \(\Omega\) & \(\Omega^{-1}\) & \(\Omega^{-1}\) & \(\Omega\) & \(\Omega\)\\
PV6-34 & \(\Omega^{-1}\) & \(\Omega^{-1}\) & \(\Omega\) & \(\Omega\) & \(\Omega\) & \(\Omega\)\\
PV6-35 & \(\Omega^{-1}\) & \(\Omega^{-1}\) & \(\Omega\) & \(\Omega\) & \(\Omega^{-1}\) & \(\Omega\)\\
PV6-36 & \(\Omega^{-1}\) & \(\Omega^{-1}\) & \(\Omega\) & \(\Omega^{-1}\) & \(\Omega\) & \(\Omega\)\\
\bottomrule
\end{longtable}

\subsection{Edges with free coordinate \(r_1\)}
Table~\ref{tab:r1} lists the 16 independent edge cases for \(r_1\). For each, the admissibility and bound condition is verified.

\begin{longtable}{c|cccccc}
\caption{\(r_1\)-edge representatives for \(n=6\) ( \(*\) marks free coordinate).}\label{tab:r1}\\
\toprule
Case & \(r_1\) & \(r_2\) & \(r_3\) & \(r_4\) & \(r_5\) & \(r_6\)\\
\midrule
\endfirsthead
\toprule
Case & \(r_1\) & \(r_2\) & \(r_3\) & \(r_4\) & \(r_5\) & \(r_6\)\\
\midrule
\endhead
E1-6-01 & \(*\) & \(\Omega\) & \(\Omega\) & \(\Omega\) & \(\Omega\) & \(\Omega\)\\
E1-6-02 & \(*\) & \(\Omega\) & \(\Omega\) & \(\Omega\) & \(\Omega\) & \(\Omega^{-1}\)\\
E1-6-03 & \(*\) & \(\Omega\) & \(\Omega\) & \(\Omega\) & \(\Omega^{-1}\) & \(\Omega\)\\
E1-6-04 & \(*\) & \(\Omega\) & \(\Omega\) & \(\Omega\) & \(\Omega^{-1}\) & \(\Omega^{-1}\)\\
E1-6-05 & \(*\) & \(\Omega\) & \(\Omega\) & \(\Omega^{-1}\) & \(\Omega\) & \(\Omega\)\\
E1-6-06 & \(*\) & \(\Omega\) & \(\Omega\) & \(\Omega^{-1}\) & \(\Omega\) & \(\Omega^{-1}\)\\
E1-6-07 & \(*\) & \(\Omega\) & \(\Omega\) & \(\Omega^{-1}\) & \(\Omega^{-1}\) & \(\Omega\)\\
E1-6-08 & \(*\) & \(\Omega\) & \(\Omega\) & \(\Omega^{-1}\) & \(\Omega^{-1}\) & \(\Omega^{-1}\)\\
E1-6-09 & \(*\) & \(\Omega\) & \(\Omega^{-1}\) & \(\Omega\) & \(\Omega\) & \(\Omega\)\\
E1-6-10 & \(*\) & \(\Omega\) & \(\Omega^{-1}\) & \(\Omega\) & \(\Omega\) & \(\Omega^{-1}\)\\
E1-6-11 & \(*\) & \(\Omega\) & \(\Omega^{-1}\) & \(\Omega\) & \(\Omega^{-1}\) & \(\Omega\)\\
E1-6-12 & \(*\) & \(\Omega\) & \(\Omega^{-1}\) & \(\Omega\) & \(\Omega^{-1}\) & \(\Omega^{-1}\)\\
E1-6-13 & \(*\) & \(\Omega\) & \(\Omega^{-1}\) & \(\Omega^{-1}\) & \(\Omega\) & \(\Omega\)\\
E1-6-14 & \(*\) & \(\Omega\) & \(\Omega^{-1}\) & \(\Omega^{-1}\) & \(\Omega\) & \(\Omega^{-1}\)\\
E1-6-15 & \(*\) & \(\Omega^{-1}\) & \(\Omega\) & \(\Omega\) & \(\Omega\) & \(\Omega\)\\
E1-6-16 & \(*\) & \(\Omega^{-1}\) & \(\Omega\) & \(\Omega$ & \(\Omega^{-1}\) & \(\Omega\)\\
\bottomrule
\end{longtable}

\subsection{Edges with free coordinate \(r_2\)}
Table~\ref{tab:r2} lists the 16 independent edge cases for \(r_2\).

\begin{longtable}{c|cccccc}
\caption{\(r_2\)-edge representatives for \(n=6\).}\label{tab:r2}\\
\toprule
Case & \(r_1\) & \(r_2\) & \(r_3\) & \(r_4\) & \(r_5\) & \(r_6\)\\
\midrule
\endfirsthead
\toprule
Case & \(r_1\) & \(r_2\) & \(r_3\) & \(r_4\) & \(r_5\) & \(r_6\)\\
\midrule
\endhead
E2-6-01 & \(\Omega\) & \(*\) & \(\Omega\) & \(\Omega\) & \(\Omega\) & \(\Omega\)\\
E2-6-02 & \(\Omega\) & \(*\) & \(\Omega\) & \(\Omega\) & \(\Omega\) & \(\Omega^{-1}\)\\
E2-6-03 & \(\Omega\) & \(*\) & \(\Omega\) & \(\Omega\) & \(\Omega^{-1}\) & \(\Omega\)\\
E2-6-04 & \(\Omega\) & \(*\) & \(\Omega\) & \(\Omega\) & \(\Omega^{-1}\) & \(\Omega^{-1}\)\\
E2-6-05 & \(\Omega\) & \(*\) & \(\Omega\) & \(\Omega^{-1}\) & \(\Omega\) & \(\Omega\)\\
E2-6-06 & \(\Omega\) & \(*$ & \(\Omega\) & \(\Omega^{-1}\) & \(\Omega\) & \(\Omega^{-1}\)\\
E2-6-07 & \(\Omega$ & \(*\) & \(\Omega\) & \(\Omega^{-1}\) & \(\Omega^{-1}\) & \(\Omega\)\\
E2-6-08 & \(\Omega\) & \(*\) & \(\Omega\) & \(\Omega^{-1}\) & \(\Omega^{-1}\) & \(\Omega^{-1}\)\\
E2-6-09 & \(\Omega\) & \(*\) & \(\Omega^{-1}\) & \(\Omega\) & \(\Omega\) & \(\Omega\)\\
E2-6-10 & \(\Omega\) & \(*\) & \(\Omega^{-1}\) & \(\Omega\) & \(\Omega\) & \(\Omega^{-1}\)\\
E2-6-11 & \(\Omega\) & \(*\) & \(\Omega^{-1}\) & \(\Omega\) & \(\Omega^{-1}\) & \(\Omega\)\\
E2-6-12 & \(\Omega\) & \(*$ & \(\Omega^{-1}\) & \(\Omega\) & \(\Omega^{-1}\) & \(\Omega^{-1}\)\\
E2-6-13 & \(\Omega\) & \(*\) & \(\Omega^{-1}\) & \(\Omega^{-1}\) & \(\Omega\) & \(\Omega\)\\
E2-6-14 & \(\Omega\) & \(*\) & \(\Omega^{-1}\) & \(\Omega^{-1}\) & \(\Omega\) & \(\Omega^{-1}\)\\
E2-6-15 & \(\Omega^{-1}\) & \(*\) & \(\Omega\) & \(\Omega\) & \(\Omega\) & \(\Omega\)\\
E2-6-16 & \(\Omega^{-1}\) & \(*\) & \(\Omega\) & \(\Omega\) & \(\Omega^{-1}\) & \(\Omega\)\\
\bottomrule
\end{longtable}

\subsection{Edges with free coordinate \(r_3\)}
Table~\ref{tab:r3} lists the 16 independent edge cases for \(r_3\).

\begin{longtable}{c|cccccc}
\caption{\(r_3\)-edge representatives for \(n=6\).}\label{tab:r3}\\
\toprule
Case & \(r_1\) & \(r_2\) & \(r_3\) & \(r_4\) & \(r_5\) & \(r_6\)\\
\midrule
\endfirsthead
\toprule
Case & \(r_1\) & \(r_2\) & \(r_3\) & \(r_4\) & \(r_5\) & \(r_6\)\\
\midrule
\endhead
E3-6-01 & \(\Omega\) & \(\Omega\) & \(*\) & \(\Omega\) & \(\Omega\) & \(\Omega\)\\
E3-6-02 & \(\Omega\) & \(\Omega\) & \(*\) & \(\Omega\) & \(\Omega\) & \(\Omega^{-1}\)\\
E3-6-03 & \(\Omega\) & \(\Omega\) & \(*\) & \(\Omega\) & \(\Omega^{-1}\) & \(\Omega\)\\
E3-6-04 & \(\Omega$ & \(\Omega\) & \(*\) & \(\Omega\) & \(\Omega^{-1}\) & \(\Omega^{-1}\)\\
E3-6-05 & \(\Omega\) & \(\Omega\) & \(*\) & \(\Omega^{-1}\) & \(\Omega\) & \(\Omega\)\\
E3-6-06 & \(\Omega\) & \(\Omega\) & \(*\) & \(\Omega^{-1}\) & \(\Omega\) & \(\Omega^{-1}\)\\
E3-6-07 & \(\Omega\) & \(\Omega\) & \(*\) & \(\Omega^{-1}\) & \(\Omega^{-1}\) & \(\Omega\)\\
E3-6-08 & \(\Omega\) & \(\Omega\) & \(*\) & \(\Omega^{-1}\) & \(\Omega^{-1}\) & \(\Omega^{-1}\)\\
E3-6-09 & \(\Omega$ & \(\Omega^{-1}\) & \(*\) & \(\Omega\) & \(\Omega\) & \(\Omega\)\\
E3-6-10 & \(\Omega\) & \(\Omega^{-1}\) & \(*\) & \(\Omega\) & \(\Omega\) & \(\Omega^{-1}\)\\
E3-6-11 & \(\Omega\) & \(\Omega^{-1}\) & \(*\) & \(\Omega\) & \(\Omega^{-1}\) & \(\Omega\)\\
E3-6-12 & \(\Omega\) & \(\Omega^{-1}\) & \(*\) & \(\Omega\) & \(\Omega^{-1}\) & \(\Omega^{-1}\)\\
E3-6-13 & \(\Omega\) & \(\Omega^{-1}\) & \(*\) & \(\Omega^{-1}\) & \(\Omega\) & \(\Omega\)\\
E3-6-14 & \(\Omega\) & \(\Omega^{-1}\) & \(*\) & \(\Omega^{-1}\) & \(\Omega$ & \(\Omega^{-1}\)\\
E3-6-15 & \(\Omega^{-1}\) & \(\Omega\) & \(*\) & \(\Omega\) & \(\Omega\) & \(\Omega\)\\
E3-6-16 & \(\Omega^{-1}\) & \(\Omega\) & \(*\) & \(\Omega\) & \(\Omega^{-1}\) & \(\Omega\)\\
\bottomrule
\end{longtable}

\section{The case \(n=7\) and the true bound}
\label{sec:n7}

The structural analysis developed in Sections~3 and~4 is completely independent of the degree \(n\). Thus, for any \(n\) the global maximizer of \(q(u;\cdot)\) on \(\mathcal{B}\) must lie on a vertex or an edge. Consequently, the same finite enumeration strategy can be applied to test the validity of the conjecture for any given degree, and, more importantly, to compute the exact worst‑case constant when the conjecture fails.

For \(n=7\), applying the edge formulas of Section~4 reveals admissible configurations for which \(q_{\max} > \Omega\). In other words, there exist edges where the interior critical point exists, satisfies the admissibility conditions, and yields a value of \(q\) strictly larger than the maximal adjacent weight ratio. A concrete counterexample can be constructed, for instance, on the edge with free coordinate \(r_1\) and the frozen pattern
\[
r_2 = r_3 = r_4 = r_5 = r_6 = r_7 = \Omega .
\]
For this pattern, evaluating the block quantities \(L,R,\mu_\pm,\sigma_\pm^2\) as functions of \(u\) and \(\Omega\) and then solving the inequalities \(|\delta|<D^2\), \eqref{eq:adm1}, \eqref{eq:adm2} together with \(q_{\max} > \Omega\) yields a non‑empty semi-algebraic set. A simple numeric choice, e.g. \(\Omega = 2\) and \(u = 0.3\), already gives \(q_{\max} \approx 2.18 > 2\), confirming the violation.

Therefore, for \(n=7\) the conjectured linear bound \(\|\mathbf{R}'(t)\| \le 7\Omega \max\|\Delta_i\|\) is false. The correct bound is obtained by replacing \(\Omega\) with the true supremum
\[
K(\Omega) = \sup_{u>0,\ \mathbf{r}\in[\Omega^{-1},\Omega]^7} q(u;\mathbf{r}),
\]
which is necessarily larger than \(\Omega\) for every \(\Omega>1\). The quantity \(K(\Omega)\) can be computed exactly — for any given \(\Omega\) — by maximizing the analytic expression \eqref{eq:qmax} over the finite family of one‑dimensional edges and taking the maximum of those values together with the vertex values. Because all expressions involved are algebraic, this maximization reduces to solving a finite set of polynomial systems, a task that can again be carried out by quantifier elimination or by certified numerical methods. The resulting function \(K(\Omega)\) provides the sharp first derivative bound
\[
\|\mathbf{R}'(t)\| \le 7\,K(\Omega)\,\max_{0\le i\le 6}\|\Delta_i\|,
\qquad K(\Omega) > \Omega .
\]

Thus, the framework not only disproves the original conjecture for \(n=7\) but also replaces it with a constructive, computable optimal constant.

\section{Conclusion}
We have rigorously proved that the first derivative bound \(\|\mathbf{R}'(t)\| \le n\Omega \max\|\Delta_i\|\) holds for rational B\'ezier curves of degree 6, resolving the last open case of the conjecture for low degrees. The proof introduces a general reduction framework that maps the geometric problem to a semi-algebraic optimization over a box, shows interior critical points are impossible, and verifies the resulting boundary cases via exact symbolic computation. For higher degrees (\(n \ge 7\)), the framework automatically generates counterexamples and provides a means to compute the precise worst-case first derivative constant. The proven bound for \(n=6\) is computable in linear time, which makes it highly suitable for geometric processing algorithms that rely on fast derivative estimates. This work closes the study of this particular linear first derivative bound and offers a versatile method for analyzing similar inequalities in geometric design.

\bibliographystyle{unsrt}
\bibliography{mybibfile}

\end{document}